\newtheorem{theoremcounter}{Theorem Counter}[section]
\theoremstyle{definition}
\newtheorem{dfn}[theoremcounter]{Definition}
\newtheorem{exm}[theoremcounter]{Example}
\theoremstyle{plain}
\newtheorem{lem}[theoremcounter]{Lemma}
\newtheorem{prop}[theoremcounter]{Proposition}
\newtheorem{thm}[theoremcounter]{Theorem}
\numberwithin{equation}{section}
\newcommand{\Z}{\mathbb{Z}}
\newcommand{\Q}{\mathbb{Q}}
\newcommand{\Zp}[1]{\mathbb Z_{(#1)}}
\newcommand{\vps}{\vphantom{$\displaystyle\sum$}}
\begin{document}

\title[]{On integrality and asymptotic behavior\\ of the $(k, l)$-G\"{o}bel sequences} 

\author{Hibiki Gima}
\address{Department of Mathematics, Kyushu University, Motooka 744, Nishi-ku, Fukuoka 819-0395, Japan}
\email{gima.hibiki.100@s.kyushu-u.ac.jp}

\author{Toshiki Matsusaka}
\address{Faculty of Mathematics, Kyushu University, Motooka 744, Nishi-ku, Fukuoka 819-0395, Japan}
\email{matsusaka@math.kyushu-u.ac.jp}

\author{Taichi Miyazaki}
\address{Department of Mathematics, Kyushu University, Motooka 744, Nishi-ku, Fukuoka 819-0395, Japan}
\email{miyazaki.taichi.478@s.kyushu-u.ac.jp}

\author{Shunta Yara}
\address{Department of Mathematics, Kyushu University, Motooka 744, Nishi-ku, Fukuoka 819-0395, Japan}
\email{yara.shunta.444@s.kyushu-u.ac.jp}

\subjclass[2020]{Primary 11B37; Secondary 11B50}
\thanks{The second author was supported by JSPS KAKENHI Grant Numbers JP20K14292 and JP21K18141.}



\maketitle

\begin{abstract}
    Recently, Matsuhira, Matsusaka, and Tsuchida revisited old studies on the integrality of $k$-G\"{o}bel sequences and showed that the first 19 terms are always integers for any integer $k\ge 2$. In this article, we further explore two topics: Ibstedt's $(k,l)$-G\"{o}bel sequences and Zagier's asymptotic formula for the $2$-G\"{o}bel sequence, and extend their results.
\end{abstract}


\section{Introduction}

Sloane's collection of integer sequences~\cite{Sloane1973} inspired G\"{o}bel to introduce a sequence defined by the recursion:
\[
	g_n = \frac{1 + g_0^2 + g_1^2 + \cdots + g_{n-1}^2}{n}
\]
starting with $g_0 =1$. Despite the initial terms $(g_n)_n = (1, 2, 3, 5, 10, 28, 154, 3520, \ldots)$ appearing to follow an integer sequence pattern, the sequence's integrality was not immediately clear, raising doubts about its suitability for inclusion in Sloane's collection. In 1975, Lenstra resolved this problem by showing that
\[
	g_n \in \Z \Longleftrightarrow n \le 42.
\]
Unfortunately, G\"{o}bel's sequence is not an integer sequence. Nevertheless, it has since been registered as an intriguing exception in his collection, OEIS~\cite[A003504]{OEIS}. (About its history, see also~\cite{MMT2024}). 

After that, several aspects of G\"{o}bel's sequence have been investigated. In 1990, Ibstedt~\cite{Ibstedt1990} focused on an alternative recursion:
\[
	(n+1) g_{n+1} = g_n (n+g_n)
\]
with the initial value $g_1 = 2$, and introduced a generalization.

\begin{dfn}\label{dfn:kl-Goebel}
	For integers $k, l \ge 2$, the \emph{$(k,l)$-G\"{o}bel sequence} $g_{k,l}(n)$ is defined by the recursion:
	\[
		(n+1) g_{k,l}(n+1) = g_{k,l}(n) (n + g_{k,l}(n)^{k-1})
	\]
	with the initial value $g_{k,l}(1) = l$.
\end{dfn}

We can pose the same question as Lenstra: When does its integrality break? To address the question, we introduce the notation
\begin{align}
	N_{k,l} \coloneqq \inf \{n \in \Z_{> 0} : g_{k,l}(n) \not\in \Z\}
\end{align}
for any $k, l \ge 2$. Lenstra's result is stated as $N_{2,2} = 43$. Then, Ibstedt provided a method to compute the values of $N_{k,l}$ and presented the list of $N_{k,l}$ for $2 \le k, l \le 11$ as follows. Here, we extend the list to include cases where $2 \leq k, l \leq 17$. We will provide Mathematica codes to compute $N_{k,l}$ in \cref{s-app}.

\begin{table}[H]
\centering
\begin{tabular}{|c|cccccccccccccccc|}
 \hline
 $l\backslash k$ & 2 & 3 & 4 & 5 & 6 & 7 & 8 & 9 & 10 & 11 & 12 & 13 & 14 & 15 & 16 & 17 \vps \\ \hline
 2 & 43 & 89 & 97 & 214 & 19 & 239 & 37 & 79 & 83 & 239 & 31 & 431 & 19 & 79 & 23 & 827 \vps\\
 3& 7 & 89 & 17 & 43 & 83 & 191 & 7 & 127 & 31 & 389 & 109 & 431 & 7 & 79 & 83 & 683 \vps\\
 4 & 17 & 89 & 23 & 139 & 13 & 359 & 23 & 158 & 41 & 169 & 103 & 643 & 31 & 79 & 167 & 118 \vps\\
 5& 34 & 89 & 97 & 107 & 19 & 419 & 37 & 79 & 83 & 137 & 31 & 431 & 19 & 41 & 23 & 59 \vps\\
 6 & 17 & 31 & 149 & 269 & 13 & 127 & 23 & 103 & 71 & 239 & 41 & 431 & 31 & 79 & 23 & 499 \vps\\
 7 & 17 & 151 & 13 & 107 & 37 & 127 & 37 & 103 & 83 & 239 & 101 & 167 & 19 & 79 & 13 & 59 \vps\\
 8 & 51 & 79 & 13 & 214 & 13 & 239 & 17 & 163 & 71 & 239 & 41 & 431 & 31 & 79 & 13 & 118 \vps\\
 9 & 17 & 89 & 83 & 139 & 37 & 191 & 23 & 103 & 23 & 239 & 41 & 431 & 47 & 79 & 29 & 177 \vps\\
 10 & 7 & 79 & 23 & 251 & 347 & 239 & 7 & 163 & 41 & 239 & 53 & 251 & 7 & 251 & 23 & 59 \vps\\
 11 & 34 & 601 & 13 & 107 & 19 & 461 & 37 & 79 & 31 & 389 & 101 & 479 & 19 & 79 & 13 & 59 \vps\\
 12 & 17 & 197 & 97 & 263 & 37 & 191 & 17 & 79 & 41 & 263 & 82 & 167 & 29 & 79 & 53 & 59 \vps\\
 13 & 17 & 151 & 23 & 263 & 37 & 127 & 37 & 158 & 31 & 137 & 61 & 431 & 19 & 41 & 83 & 271 \vps\\
 14 & 43 & 158 & 67 & 139 & 37 & 191 & 23 & 158 & 41 & 239 & 29 & 383 & 29 & 79 & 23 & 683 \vps\\
 15 & 67 & 197 & 173 & 139 & 37 & 239 & 37 & 127 & 31 & 1097 & 82 & 431 & 31 & 419 & 23 & 347 \vps\\
 16 & 59 & 151 & 157 & 107 & 59 & 359 & 37 & 103 & 46 & 137 & 29 & 431 & 29 & 79 & 23 & 607 \vps\\
 17 & 7 & 89 & 67 & 43 & 13 & 127 & 7 & 179 & 41 & 263 & 31 & 431 & 7 & 79 & 59 & 59 \vps\\ \hline
\end{tabular}
\caption{The list of $N_{k,l}$. As noted in OEIS~\cite[A097398]{OEIS}, the articles of Ibstedt~\cite{Ibstedt1990} and Guy~\cite[E15]{Guy2004} contain some mistakes in the values.}
\label{tab:Ibstedt}
\end{table}

In 1996, Zagier~\cite{Zagier1996} considered the asymptotic behavior of G\"{o}bel's sequence and described it as
\begin{align}\label{Zagier-asymp}
	g_{2,2}(n) \sim C^{2^n}n \left(1+\frac{2}{n}-\frac{1}{n^2}+\frac{4}{n^3}-\frac{21}{n^4}+\frac{138}{n^5}-\frac{1091}{n^6}+\cdots\right) \quad (n \to \infty)
\end{align}
without proof, where $C=1.0478314475764112295599\ldots$ is a constant. Finch~\cite[Section 6.10]{Finch2003} notes that this asymptotic formula shares the same coefficients as that for the sequence $s_n$ introduced by Somos. Here, the sequence $s_n$ is defined by the recursion:
\begin{align}\label{somos-seq}
	s_n = n s_{n-1}^2 
\end{align}
starting with $s_0 = 1$. Then, it satisfies that
\[
	s_n \sim \sigma^{2^n} n^{-1} \left(1+\frac{2}{n}-\frac{1}{n^2}+\frac{4}{n^3}-\frac{21}{n^4}+\frac{138}{n^5}-\frac{1091}{n^6}+\cdots\right)^{-1} \quad (n \to \infty),
\]
where
\begin{align}
	\sigma \coloneqq \prod_{n=1}^\infty n^{1/2^n} = 1.6616879496 \ldots
\end{align}
is called the \emph{Somos constant}.

After a period of silence, in 2023, inspired by \cite{KobayashiSeki2023}, Matsuhira, Tsuchida, and the second author~\cite{MMT2024} addressed the problem of determining the minimum value of $N_{k,2}$ and showed that
\begin{align}
	\min_{k \ge 2} N_{k,2} = 19.
\end{align}
Once again, G\"{o}bel's sequence returned as a subject of research.

In this article, we combine the above results and extend them as suggested by the previous work~\cite{MMT2024} in the last remarks. First, we consider the minimum value of all $N_{k,l}$ and show the following.

\begin{thm}\label{Main1}
	We have $\min_{k, l \ge 2} N_{k,l} = 7$, which implies that $g_{k,l}(n) \in \Z$ for any $k, l \ge 2$ and $1 \le n \le 6$. Moreover, we have $N_{k,l} = 7$ if and only if $k \equiv 2 \pmod{6}$ and $l \equiv 3 \pmod{7}$.
\end{thm}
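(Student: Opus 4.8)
The plan is to split the statement into two independent claims and then combine them: first, that $N_{k,l}\ge 7$ for all $k,l\ge 2$, i.e. that $g_{k,l}(1),\dots,g_{k,l}(6)\in\Z$ unconditionally; and second, a sharp criterion for $g_{k,l}(7)\in\Z$. Writing $g(n)=g_{k,l}(n)$, the recursion gives $g(7)=g(6)\bigl(6+g(6)^{k-1}\bigr)/7$, so once the first claim is in hand we have $N_{k,l}=7$ exactly when $7\nmid g(6)\bigl(6+g(6)^{k-1}\bigr)$. The unifying device throughout is that the exponent $g(n)^{k-1}$ modulo a fixed $m$ can be collapsed by Fermat's or Euler's theorem: modulo a prime $p$ it depends only on $g(n)\bmod p$ and $(k-1)\bmod(p-1)$ (and vanishes when $p\mid g(n)$), and modulo a prime power only on $(k-1)$ modulo the Carmichael function $\lambda(m)$. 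Hence integrality at each step, and the residues $g(n)\bmod 7$, are governed by $l$ and $k$ modulo explicit small numbers, reducing everything to a finite verification.

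For the first claim I would induct on $n$, the inductive step being to show $(n+1)\mid g(n)\bigl(n+g(n)^{k-1}\bigr)$. For the prime moduli $n+1\in\{2,3,5,7\}$ the factors $2,\dots,n$ are invertible, so $g(n)\bmod(n+1)$ is an explicit expression in $l$ and $k$ obtained by carrying the recursion through with modular inverses; for the composite steps $n+1\in\{4,6\}$ the division by the even part forces me to track $g$ to higher prime-power precision, e.g. recovering $g(2)\bmod 4$ from $l(1+l^{k-1})\bmod 8$, and for $n+1=6$ splitting by the Chinese Remainder Theorem, lifting to $\bmod 16$ on the $2$-part and to $\bmod 9$ on the $3$-part. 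The key phenomenon to exhibit---and the reason the statement holds rather than being a near miss---is that the apparently dangerous residue combinations are precluded by the coupling between $g(n)\bmod(n+1)$ and $k$: already at $n=2$ one checks that $g(2)\equiv2\pmod3$ forces $l\equiv2\pmod3$ and $k$ \emph{odd}, which is exactly the parity making $2+g(2)^{k-1}\equiv2+2^{k-1}\equiv0\pmod3$, so $g(3)\in\Z$. Running this bookkeeping through $n\le6$ over the finitely many residue classes of $(l,k)$ gives $N_{k,l}\ge7$.

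For the second claim, since $7\nmid 6!$ I would compute $g(6)\bmod 7$ directly from the recursion using the inverses $2^{-1}\equiv4$, $3^{-1}\equiv5$, $4^{-1}\equiv2$, $5^{-1}\equiv3$, $6^{-1}\equiv6\pmod7$; by the exponent collapse, $g(6)\bmod7$ is a function of the pair $(l\bmod7,\,k\bmod6)$ alone, a table of $7\times6$ entries. Since $7\mid g(6)\bigl(6+g(6)^{k-1}\bigr)$ iff $g(6)\equiv0$ or $g(6)^{k-1}\equiv1\pmod7$ (as $-6\equiv1$), we get
\[
	g(7)\notin\Z \iff g(6)\not\equiv0\pmod7 \ \text{ and }\ g(6)^{k-1}\not\equiv1\pmod7.
\]
Evaluating the table, when $k\equiv2\pmod6$ one has $g(6)^{k-1}\equiv g(6)\pmod7$ and the computed values are $g(6)\equiv 0,1,0,6,0,1,0$ for $l\equiv0,\dots,6$, so the failure ``$g(6)\not\equiv0$ and $g(6)\not\equiv1$'' occurs exactly at $l\equiv3$, where $g(6)\equiv6$. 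For the five remaining classes of $k\bmod6$ the same computation yields, for every $l$, either $g(6)\equiv0$ or $g(6)^{k-1}\equiv1$ (for instance $k\equiv1$ gives $g(6)\equiv l$, whence $g(6)^{k-1}\equiv1$ whenever $g(6)\not\equiv0$), so no failure arises. This pins $g(7)\notin\Z$ down to $k\equiv2\pmod6$ and $l\equiv3\pmod7$.

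Combining the two claims, $N_{k,l}\ge7$ always and $N_{k,l}=7$ precisely in the stated congruence class, whence $\min_{k,l\ge2}N_{k,l}=7$, attained for instance at $(k,l)=(2,3)$; as a consistency check, every residue pair $(k\bmod6,\,l\bmod7)$ already occurs in the range $2\le k,l\le17$ of \Cref{tab:Ibstedt}. I expect the main obstacle to be the composite steps $n+1\in\{4,6\}$ of the first claim: the $2$- and $3$-adic lifting needed to determine $g(n)\bmod(n+1)$ is where the routine but error-prone casework concentrates, and where one must take care that the exponent collapse is applied only to units.
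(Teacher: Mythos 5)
Your proposal is correct and follows essentially the same route as the paper: both use Euler's theorem to collapse the dependence on $k$ and $l$ to residues modulo $\varphi(p^{\nu_p(7!)})$ and $p^{\nu_p(7!)}$ for $p\in\{2,3,5,7\}$, reducing the statement to a finite verification, which the paper performs by computer over explicit ranges of $(k,l)$ while you carry out the decisive $p=7$ part by hand (your table of $g(6)\bmod 7$ and the resulting criterion $k\equiv 2\pmod 6$, $l\equiv 3\pmod 7$ agree with the paper). The one subtlety you flag at the end---that the exponent collapse fails for non-units modulo prime powers---is exactly what the paper's key lemma handles by requiring $k\ge\nu_p(7!)$ and checking the finitely many smaller values of $k$ separately, so your plan goes through once that caveat is treated the same way.
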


Secondly, we give a complete proof of Zagier's asymptotic formula~\eqref{Zagier-asymp} and generalize it for any $(k,l)$-G\"{o}bel sequences. Before stating the theorem, we recall the definition of asymptotic expansions.

\begin{dfn}
	Assume that the sequence $(\lambda_r(n))_r$ satisfies $\lambda_{r+1}(n) = o(\lambda_r(n))$ as $n \to \infty$, that is,
	\[
		\lim_{n \to \infty} \frac{\lambda_{r+1}(n)}{\lambda_r(n)} = 0
	\]
	for any $r$. For a sequence $(c_n)_n$, we call $(a_r)_r$ its asymptotic coefficients and write
	\[
		c_n \sim \sum_{r=0}^\infty a_r \lambda_r(n)
	\]
	if 
	\[
		c_n - \sum_{r=0}^R a_r \lambda_r(n) = O(\lambda_{R+1}(n)) \quad (n \to \infty)
	\]
	holds for any $R \ge 0$.
\end{dfn}

By adapting the sequence $\lambda_r(n) = C_{k,l}^{k^n} n^{\frac{1}{k-1}} n^{-r}$, we obtain the following asymptotic expansion.

\begin{thm}\label{thm:generalized-zagier-asymptotic}
	For any integers $k, l \ge 2$, there exist a constant $C_{k,l}>1$ and a sequence $(a_{k,r})_r$ such that
	\begin{align*}
		g_{k,l}(n)\sim C_{k,l}^{k^n} n^{\frac{1}{k-1}} \left(1+\sum_{r=1}^\infty\frac{a_{k,r}}{n^r}\right) \quad (n \to \infty). 
	\end{align*}
\end{thm}

The constant $C_{k,l}$ and the sequence $(a_{k,r})_r$ will be explicitly defined in \eqref{dfn:C-kl} and \eqref{def-akr}, respectively.

In \cref{s2} and \cref{s3}, we give proofs of the above theorems, respectively. In \cref{s4}, we provide further observations on a variability of $g_{k,l}(n)$ modulo a higher power of $p$.

\section{How long can $(k,l)$-G\"{o}bel sequences remain integers?} \label{s2}

In this section, we provide a proof of \cref{Main1}, drawing on Ibstedt's method for computing $N_{k,l}$ and the argument presented by Matsuhira--Matsusaka--Tsuchida~\cite{MMT2024}. First, we prepare some notations

\subsection{Notations and key properties}

Let $\mathcal{P}$ be the set of all prime numbers. For each $p \in \mathcal{P}$, we let $\Zp{p}$ denote the localization of $\Z$ at the prime ideal $(p)$, that is, $\Zp{p} = \{a/b \in \Q : p \nmid b\}$. By the fact that
\begin{align}\label{Z-Zp}
	\bigcap_{p \in \mathcal{P}} \Zp{p} = \Z,
\end{align}
for $x \in \Q$, we have $x \in \Z$ if and only if $x \in \Zp{p}$ for all $p \in \mathcal{P}$. Let $\nu_p(x)$ be the $p$-adic valuation of $x \in \Q$. More precisely, for an integer $n$, $\nu_p(n)$ is the exponent of the largest power of $p$ that divides $n$, and it is extended to rational numbers by $\nu_p(a/b) = \nu_p(a) - \nu_p(b)$. The following lemma is the key for our proof of \cref{Main1}. For convenience, we will also include the cases when $k=1$ or $l=1$, in which $g_{k,l}(n)$ is a constant sequence.

\begin{lem}\label{lem:key-properties}
	Let $N$ be a positive integer. For each $p \in \mathcal{P}$, we put $r = \nu_p(N!)$. Let $k, k_1, k_2, l, l_1, l_2 \ge 1$ and $1 \le n \le N$ be integers. Then we have the following.
	\begin{enumerate}
		\item If $p > N$, then $g_{k,l}(N) \in \Zp{p}$.
		\item If $g_{k,l}(n) \not\in \Zp{p}$, then $g_{k,l}(n+1) \not\in \Zp{p}$.
		\item Assume that $p \le N$, that is, $r \ge 1$. If $k_1, k_2 \ge r$ and $k_1 \equiv k_2 \pmod{\varphi(p^r)}$, then $g_{k_1, l}(n) \in \Zp{p}$ if and only if $g_{k_2, l}(n) \in \Zp{p}$, where $\varphi(n)$ is the Euler totient function. Moreover, in this case, we have $g_{k_1, l}(n) - g_{k_2, l}(n) \in p^{r-\nu_p(n!)} \Zp{p}$.
		\item Assume that $p \le N$. If $l_1 \equiv l_2 \pmod{p^r}$, then $g_{k,l_1}(n) \in \Zp{p}$ if and only if $g_{k, l_2}(n) \in \Zp{p}$. Moreover, in this case, we have $g_{k,l_1}(n) - g_{k,l_2}(n) \in p^{r-\nu_p(n!)} \Zp{p}$.
	\end{enumerate}
\end{lem}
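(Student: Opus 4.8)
The plan is to control all four parts through the $p$-adic valuation of the recursion, rewritten as
\[
	g_{k,l}(n+1) = \frac{g_{k,l}(n)\bigl(n + g_{k,l}(n)^{k-1}\bigr)}{n+1},
\]
and to prove the two congruence statements (parts (3) and (4)) by a single induction on $n$ that simultaneously tracks membership in $\Zp{p}$ and the $p$-adic size of the difference of the two sequences. Throughout I would write $s_n \coloneqq r - \nu_p(n!)$ and note that, since $\nu_p(n!)$ is non-decreasing in $n$ and $\nu_p(N!) = r$, one has $s_n \ge 0$ for all $1 \le n \le N$, with $s_1 = r$.

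For part (1), I would induct on $n \le N$: starting from $g_{k,l}(1) = l \in \Z$, each step divides only by $n+1 \le N < p$, a $p$-adic unit, so $\Zp{p}$-membership is preserved. For part (2), the case $k = 1$ is vacuous since the sequence is then constant equal to $l \in \Z$; for $k \ge 2$, if $\nu_p(g_{k,l}(n)) = -m < 0$ then $\nu_p(g_{k,l}(n)^{k-1}) = -(k-1)m < 0 \le \nu_p(n)$, so the numerator has valuation $-m - (k-1)m = -km$, and dividing by $n+1$ gives $\nu_p(g_{k,l}(n+1)) = -km - \nu_p(n+1) < 0$.

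For parts (3) and (4), denoting the two sequences under comparison by $g$ and $\tilde g$, I would set $a = g(n)$, $b = \tilde g(n)$ and, assuming both lie in $\Zp{p}$ with $a - b \in p^{s_n}\Zp{p}$, compute
\[
	g(n+1) - \tilde g(n+1) = \frac{n(a-b) + \bigl(a^{k_1} - b^{k_2}\bigr)}{n+1},
\]
where in part (4) the two exponents coincide, so the bracket is $a^k - b^k$. Since $\nu_p\bigl(n(a-b)\bigr) \ge s_n$, the crux is to show $\nu_p(a^{k_1} - b^{k_2}) \ge s_n$. In part (4) this is immediate from the factorization $a^k - b^k = (a-b)(a^{k-1} + \cdots + b^{k-1})$. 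In part (3) I would split $a^{k_1} - b^{k_2} = (a^{k_1} - b^{k_1}) + (b^{k_1} - b^{k_2})$: the first summand is divisible by $a-b$, hence by $p^{s_n}$; for the second (assuming without loss of generality $k_1 \ge k_2$), writing $k_1 - k_2 = m\varphi(p^r)$ with $m \ge 0$, Euler's theorem gives $b^{k_1-k_2} \equiv 1 \pmod{p^r}$ when $p \nmid b$, while if $p \mid b$ then $\nu_p(b^{k_2}) \ge k_2 \ge r$, so in either case $\nu_p(b^{k_1} - b^{k_2}) \ge r \ge s_n$. Dividing by $n+1$ then yields $g(n+1) - \tilde g(n+1) \in p^{s_{n+1}}\Zp{p}$, closing the induction (the base case $n=1$ being $l_1 - l_2 \in p^r\Zp{p}$ in part (4) and $0$ in part (3)).

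The remaining point is the ``if and only if'' for membership, and here I would invoke part (2). As $s_n \ge 0$, whenever both sequences lie in $\Zp{p}$ up to step $n$ their difference lies in $\Zp{p}$; hence at the first index where one sequence were to leave $\Zp{p}$, the congruence would force the other to leave as well, and by part (2) neither then returns. Consequently, for every $n \le N$ the two sequences lie in $\Zp{p}$ together or not at all, which is exactly the claimed equivalence, while the difference estimate is precisely the content of the induction above. I expect the main obstacle to be the valuation bound $\nu_p(a^{k_1} - b^{k_2}) \ge s_n$ in part (3): it is exactly here that the two hypotheses $k_1, k_2 \ge r$ and $k_1 \equiv k_2 \pmod{\varphi(p^r)}$ are both needed, through the case split on whether $b$ is a $p$-adic unit.
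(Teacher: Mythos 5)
Your proposal is correct and follows essentially the same route as the paper: part (2) via the valuation identity $\nu_p(g_{k,l}(n+1)) = k\nu_p(g_{k,l}(n)) - \nu_p(n+1)$, and parts (3)--(4) by induction on $n$ tracking the difference in $p^{r-\nu_p(n!)}\Zp{p}$, with the term $a^{k_1}-b^{k_2}$ controlled by Euler's theorem in the unit case and by $k_1,k_2\ge r$ otherwise. Your write-up is if anything slightly more explicit than the paper's (the split $a^{k_1}-b^{k_2}=(a^{k_1}-b^{k_1})+(b^{k_1}-b^{k_2})$ and the $k=1$ case of part (2) are spelled out rather than left implicit), but there is no substantive difference in method.
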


\begin{proof}
	(1) It is obvious from \cref{dfn:kl-Goebel}. 
	
	(2) If $g_{k,l}(n) \not\in \Zp{p}$, that is, $\nu_p(g_{k,l}(n)) < 0$, then we have
	\begin{align*}
		\nu_p(g_{k,l}(n+1)) &= \nu_p(g_{k,l}(n)) + \nu_p(n+g_{k,l}(n)^{k-1}) - \nu_p(n+1)\\
			&= k \nu_p(g_{k,l}(n)) - \nu_p(n+1) < 0.
	\end{align*}
	
	(3) It follows from Euler's theorem, (a generalization of Fermat's little theorem), and induction on $n$. For the initial case, we have $g_{k_1, l}(1) = l = g_{k_2, l}(1)$. Assume that the claim holds for some $1 \le n < N$. If $g_{k_1, l}(n) \not\in \Zp{p}$, then both of $g_{k_1, l}(n+1)$ and $g_{k_2, l}(n+1)$ are not in $\Zp{p}$ by the induction hypothesis and (2). On the other hand, if $g_{k_1, l}(n) \in \Zp{p}$, then
	\begin{align*}
		(n+1) \bigg(g_{k_1, l}(n+1) - g_{k_2, l}(n+1) \bigg) &= n (g_{k_1, l}(n) - g_{k_2, l}(n)) + (g_{k_1, l}(n)^{k_1} - g_{k_2, l}(n)^{k_2}).
	\end{align*}
	By the induction hypothesis, the first term is in $p^{r-\nu_p(n!)} \Zp{p}$. As for the second term, if $g_{k_1,l}(n) \not\in p\Zp{p}$, then by applying Euler's theorem, it belongs to $p^{r-\nu_p(n!)} \Zp{p}$. If $g_{k_1, l}(n) \in p\Zp{p}$, then since $k_1, k_2 \ge r$, it is also in $p^{r-\nu_p(n!)} \Zp{p}$. Therefore, by dividing the both sides by $(n+1)$, we get $g_{k_1, l}(n+1) - g_{k_2, l}(n+1) \in p^{r - \nu_p((n+1)!)} \Zp{p}$. In particular, if $g_{k_1, l}(n+1) \not\in \Zp{p}$, then $g_{k_2, l}(n+1) \not\in \Zp{p}$, and vice versa.
	
	(4) Since $g_{k, l_1}(n)$ and $g_{k, l_2}(n)$ satisfy the same recursion with the same initial value modulo $p^r$, the claim immediately follows in a similar manner to (3).
\end{proof}

\subsection{Proof of \cref{Main1}}

What we have to check are the following two claims.
\begin{enumerate}
	\item For any $k, l \ge 2$ and $1 \le n \le 6$, $g_{k,l}(n) \in \Z$.
	\item $g_{k,l}(7) \not\in \Z$ if and only if $k \equiv 2 \pmod{6}$ and $l \equiv 3 \pmod{7}$.
\end{enumerate}

By applying \cref{lem:key-properties} with $N=7$ and combining it with \eqref{Z-Zp}, these claims can be translated as follows.
\begin{lem}\label{Main1-another}
	\cref{Main1} is equivalent to the following claims for $p = 2,3,5,7$.
	\begin{enumerate}
		\item For $2 \le k \le 11$ and $1 \le l \le 16$, we have $g_{k,l}(7) \in \Zp{2}$.
		\item For $2 \le k \le 7$ and $1 \le l \le 9$, we have $g_{k,l}(7) \in \Zp{3}$.
		\item For $1 \le k \le 4$ and $1 \le l \le 5$, we have $g_{k,l}(7) \in \Zp{5}$.
		\item For $1 \le k \le 6$ and $1 \le l \le 7$, $g_{k,l}(7) \not\in \Zp{7}$ if and only if $k=2$ and $l=3$.
	\end{enumerate}
\end{lem}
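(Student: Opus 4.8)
The plan is to prove the equivalence purely from the four parts of \cref{lem:key-properties} together with the intersection identity \eqref{Z-Zp}, so that each assertion of \cref{Main1} translates term-by-term into one of the finite verifications. First I would record the factorization $7! = 2^4\cdot 3^2\cdot 5\cdot 7$, whose prime divisors are exactly $2,3,5,7$. For any prime $p>7$ and any $1\le n\le 7$, part (1) of \cref{lem:key-properties} gives $g_{k,l}(n)\in\Zp{p}$ for free; combined with \eqref{Z-Zp}, this shows that the integrality of $g_{k,l}(7)$ is governed entirely by the four localizations at $p\in\{2,3,5,7\}$, which is what isolates the primes named in the statement.

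Next I would collapse the infinite ranges of $k$ and $l$ to the finite ones listed, one prime at a time, using parts (3) and (4) with $N=7$. For each $p$ set $r=\nu_p(7!)$, so that $r=4,2,1,1$ for $p=2,3,5,7$ respectively. Part (4) shows that membership $g_{k,l}(7)\in\Zp{p}$ depends only on $l \bmod p^r$, i.e.\ on $l$ modulo $16,9,5,7$, so it suffices to let $l$ range over one complete residue system, which is exactly $1\le l\le p^r$ in each claim. Likewise part (3) shows that for $k\ge r$ membership depends only on $k \bmod \varphi(p^r)$, i.e.\ modulo $8,6,4,6$. The one wrinkle is that part (3) requires $k\ge r$, so for $p=2$ (where $r=4$) the exponents $k=2,3$ fall below the threshold and must be retained individually; this is precisely why the $k$-range there is $2\le k\le 11=\{2,3\}\cup\{4,\dots,11\}$ rather than a single period, whereas for $p=3,5,7$ one has $r\le 2$ and the stated range is one full period starting at the admissible minimum. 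Checking that each listed range is an exact transversal for the relevant modulus is the main bookkeeping step.

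Finally I would reassemble \cref{Main1}. Since parts (3),(4) compare $\Zp{p}$-membership only for indices up to $N=7$, everything is phrased through $g_{k,l}(7)$; to descend to the smaller indices in the first assertion of \cref{Main1} I would invoke the contrapositive of part (2), which gives $g_{k,l}(7)\in\Zp{p}\Rightarrow g_{k,l}(n)\in\Zp{p}$ for all $1\le n\le 7$. Thus claims (1)--(3), once extended to all $k,l$ by periodicity, force $g_{k,l}(n)\in\Z$ for $1\le n\le 6$ (the primes $p\ge 7$ being automatic there by part (1)) and show that the only prime that can obstruct integrality of $g_{k,l}(7)$ is $p=7$; claim (4) then pins down that obstruction, so that $g_{k,l}(7)\notin\Zp{7}$ holds exactly for the representative $(k,l)=(2,3)$, which by periodicity modulo $\varphi(7)=6$ in $k$ and modulo $7$ in $l$ is the condition $k\equiv 2\pmod 6$ and $l\equiv 3\pmod 7$. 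I expect the principal obstacle to be organizational rather than conceptual: matching $r=\nu_p(7!)$, the periods $\varphi(p^r)$ and $p^r$, and the $k\ge r$ restriction to the precise ranges in each of the four claims, with the $p=2$ case being the delicate one.
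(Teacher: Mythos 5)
Your proposal is correct and follows essentially the same route as the paper: reduce to the primes dividing $7!$ via \cref{lem:key-properties}~(1) and \eqref{Z-Zp}, use (2) to pass between $n\le 6$ and $n=7$, and use the periodicity in (3) and (4) with $r=\nu_p(7!)$ to shrink $k$ and $l$ to the listed transversals (including the $k\ge r$ caveat that forces $k=2,3$ to be kept separately when $p=2$). The paper's proof only writes out the $p=2$ case and declares the others similar, so your version is if anything slightly more complete.
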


\begin{proof}
	Since it is obvious that \cref{Main1} implies the claims, we will now show the converse implication. By \cref{lem:key-properties} (1), $g_{k,l}(7) \in \Zp{p}$ for any $p > 7$. Since the discussion remains similar for the remaining cases of $p=2,3,5$, and 7, let us focus here on explaining the case when $p=2$. By \cref{lem:key-properties} (2), $g_{k,l}(7) \in \Zp{2}$ if and only if $g_{k,l}(n) \in \Zp{2}$ for all $1 \le n \le 7$. By the periodicity shown in \cref{lem:key-properties} (3) and (4), it is enough to show that $g_{k,l}(7) \in \Zp{2}$ for $k = 2,3$, and $4 \le k < 4 + \varphi(2^4)$, and $1 \le l \le 2^4$, where we note that $\nu_2(7!) = 4$. This is the claim for $p=2$.
\end{proof}

\begin{proof}[Proof of \cref{Main1}]
	Since there are only a finite (and relatively small) number of cases to consider, it can be checked by using Mathematica. The codes to compute them are available in \cref{s-app}. Alternatively, it is enough (and possible) to check that $g_{k,l}(7) \in \Z$ if and only if $(k,l) \neq (2,3), (2,10), (8,3), (8,10)$ for $1 \le k \le 11$ and $1 \le l \le 16$ because $g_{k,l}(7)$ is not too large in these cases.
\end{proof}

\section{Zagier's asymptotic formula and its generalization} \label{s3}

In this section, we prove \cref{thm:generalized-zagier-asymptotic} by defining the constant $C_{k,l}$ and the asymptotic coefficients $a_{k,r}$ explicitly.

\subsection{The constant $C_{k,l}$}

We first show a monotonic behavior of $C_{k,l}(n) \coloneqq g_{k,l}(n)^{1/k^n}$.

\begin{figure}[H]
\centering
	\begin{subfigure}{0.3\columnwidth}
		\includegraphics[width=\columnwidth]{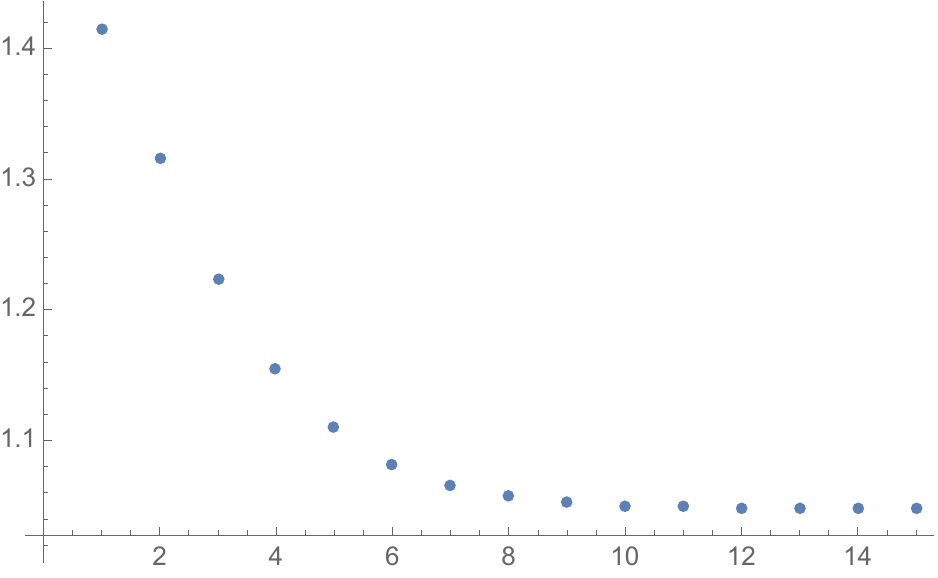}
		\caption*{$(k,l)=(2,2)$}
	\end{subfigure}
	\begin{subfigure}{0.3\columnwidth}
		\includegraphics[width=\columnwidth]{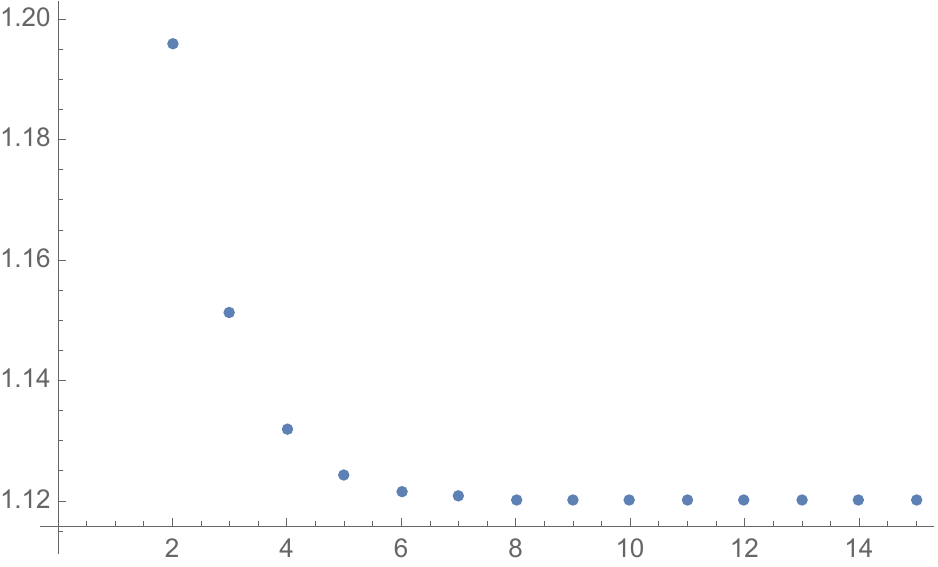}
		\caption*{$(k,l)=(3,2)$}
	\end{subfigure}
	\begin{subfigure}{0.3\columnwidth}
		\includegraphics[width=\columnwidth]{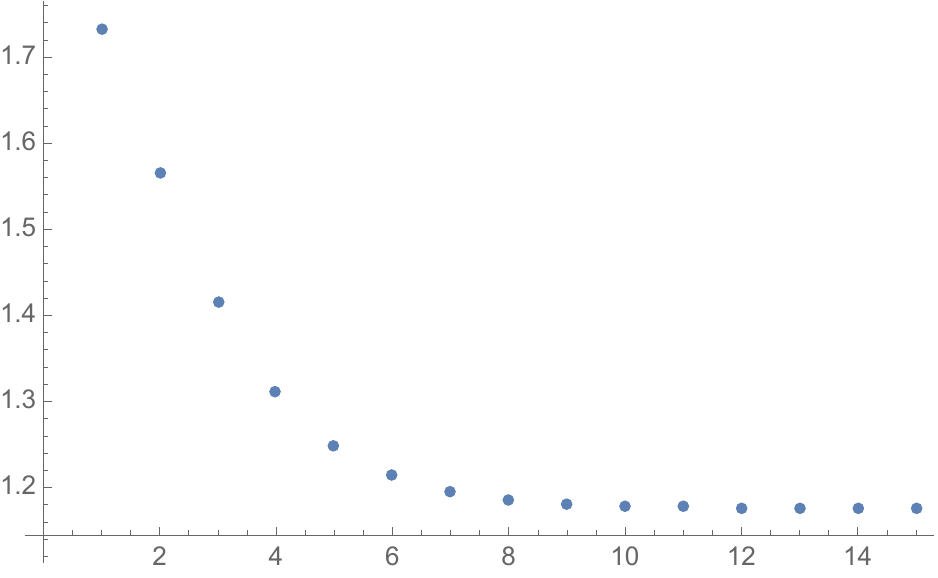}
		\caption*{$(k,l)=(2,3)$}
	\end{subfigure}
	\caption{The plots of $C_{k,l}(n)$ for $1\leq n\leq 15$.}
	\label{fig:plot-d}
\end{figure}

\begin{lem}
	For any integers $k,l \ge 2$ and $n \ge 1$, we have $C_{k,l}(n) > C_{k,l}(n+1) > 1$.
\end{lem}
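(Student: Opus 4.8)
The plan is to translate the claim about $C_{k,l}(n) = g_{k,l}(n)^{1/k^n}$ into an elementary inequality for the underlying sequence. Writing $g_n = g_{k,l}(n)$ for brevity, the recursion of \cref{dfn:kl-Goebel} rearranges to
\[
	g_{n+1} = \frac{g_n^k + n\, g_n}{n+1} = g_n \cdot \frac{g_n^{k-1} + n}{n+1},
\]
so the whole lemma hinges on controlling this multiplicative factor.

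First I would prove by induction that $g_n \ge 2$ for every $n \ge 1$; in particular $g_n > 1$. The base case is $g_1 = l \ge 2$. For the inductive step, if $g_n \ge 2$ then $g_n^{k-1} \ge 2^{k-1} \ge 2$ because $k \ge 2$, hence $g_n^{k-1} + n \ge n+2 > n+1$, so the factor $(g_n^{k-1}+n)/(n+1)$ is strictly greater than $1$ and $g_{n+1} > g_n \ge 2$. (This incidentally shows that $(g_n)_n$ is strictly increasing.)

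With $g_n > 1$ in hand, both inequalities reduce to one-line computations. Since $x \mapsto x^{k^{n+1}}$ is strictly increasing on the positive reals, the inequality $C_{k,l}(n) > C_{k,l}(n+1)$ is equivalent, after raising both sides to the power $k^{n+1}$, to $g_n^k > g_{n+1}$. Substituting the recursion, $g_n^k > (g_n^k + n g_n)/(n+1)$ is equivalent to $(n+1)\,g_n^k > g_n^k + n g_n$, i.e.\ to $g_n^{k-1} > 1$, which holds because $g_n > 1$ and $k \ge 2$. Likewise $C_{k,l}(n+1) > 1$ is equivalent to $g_{n+1} > 1$, which is immediate from $g_{n+1} \ge 2$.

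There is no genuine obstacle here; the one point that must be secured first is the strict lower bound $g_n > 1$, since this positivity is exactly what powers both the reduction to $g_n^{k-1} > 1$ and the monotonicity of the exponentiation used to pass between $C_{k,l}$ and $g$. Once the induction is established, the two claimed inequalities follow by the direct reductions above.
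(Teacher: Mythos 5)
Your proof is correct and follows essentially the same route as the paper: establish $g_{k,l}(n) > 1$ by induction (you prove the marginally stronger $g_{k,l}(n) \ge 2$), then reduce $C_{k,l}(n) > C_{k,l}(n+1)$ to the inequality $g_{k,l}(n)^{k-1} > 1$ via the recursion. No gaps.
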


\begin{proof}
	First, we check that $C_{k,l}(n) > 1$, that is, $g_{k,l}(n) > 1$ by induction on $n$. The initial condition is satisfied by $g_{k,l}(1) = l \ge 2$. Assume that $g_{k,l}(n) > 1$. Then, we have
	\[
		g_{k,l}(n+1) = \frac{1}{n+1} \bigg(n g_{k,l}(n) + g_{k,l}(n)^k \bigg) > \frac{n+1}{n+1} = 1.
	\]
	Next, we check for monotonicity. Since $g_{k,l}(n) > 1$, we have $g_{k,l}(n)^{k-1} > 1$, and then
	\[
		\frac{1}{n+1} \left(\frac{n}{g_{k,l}(n)^{k-1}} + 1 \right) < 1.
	\]
	Hence, we have
	\begin{align}\label{Ckl-rec}
		\left(\frac{C_{k,l}(n+1)}{C_{k,l}(n)}\right)^{k^{n+1}} = \frac{g_{k,l}(n+1)}{g_{k,l}(n)^k} = \frac{1}{n+1} \frac{n g_{k,l}(n) + g_{k,l}(n)^k}{g_{k,l}(n)^k} < 1,
	\end{align}
	which concludes the proof.
\end{proof}

The above lemma shows that $C_{k,l}(n)$ converges. We denote the limit as
\begin{align}\label{dfn:C-kl}
	C_{k,l} \coloneqq \lim_{n \to \infty} C_{k,l}(n) \ge 1.
\end{align}

Next, we introduce some notations to show the strict inequality $C_{k,l} > 1$. For $k \ge 2$, we define the \emph{$k$-Somos constant} $\sigma_k$ by
\begin{align}\label{k-Somos}
	\sigma_k \coloneqq \exp \left(\sum_{m=1}^\infty \frac{\log m}{k^m} \right) > 1,
\end{align}
(see also~\cite{Sondow2007}). For a real number $t_0 > 1$, we define a Somos-like sequence $t_k(n)$ by the recursion:
\begin{align}\label{tk-rec}
	t_k(n+1) = \frac{1}{n+1} t_k(n)^k
\end{align}
with the initial value $t_k(0) = t_0$.

\begin{lem}\label{lem:tkn>1}
	If $t_0 > \sigma_k$, then there exist a constant $c > 1$ such that $t_k(n)^{1/k^n}$ decreases monotonically and tends to $c$ as $n \to \infty$.
\end{lem}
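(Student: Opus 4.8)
The plan is to linearize the recursion \eqref{tk-rec} by passing to logarithms and then solve it in closed form. Setting $T_n \coloneqq \log t_k(n)$, the recursion becomes the inhomogeneous linear relation $T_{n+1} = k T_n - \log(n+1)$, which unfolds explicitly to
\[
	T_n = k^n \log t_0 - \sum_{j=1}^n k^{n-j} \log j.
\]
This is verified by a one-line induction, with the base case $T_0 = \log t_0$.

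The key simplification comes from dividing by $k^n$: the quantity $\log\bigl(t_k(n)^{1/k^n}\bigr) = k^{-n} T_n$ collapses to a partial sum of exactly the series defining the $k$-Somos constant,
\[
	\log\bigl(t_k(n)^{1/k^n}\bigr) = \log t_0 - \sum_{j=1}^n \frac{\log j}{k^j}.
\]
From here the two assertions of the lemma follow at once. Since each term $\frac{\log j}{k^j}$ is non-negative, the partial sums are non-decreasing in $n$, so $t_k(n)^{1/k^n}$ is monotonically decreasing; and letting $n \to \infty$ and recalling the definition \eqref{k-Somos} of $\sigma_k$, I obtain
\[
	\lim_{n \to \infty} t_k(n)^{1/k^n} = t_0 \exp\left(-\sum_{j=1}^\infty \frac{\log j}{k^j}\right) = \frac{t_0}{\sigma_k} \eqqcolon c.
\]
The hypothesis $t_0 > \sigma_k$ is precisely what forces $c > 1$, which completes the argument.

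I expect no genuine analytic obstacle: the whole proof hinges on recognizing that, after rescaling the explicit solution by $k^{-n}$, the resulting expression is governed exactly by the Somos-type series in \eqref{k-Somos}. The only points deserving a word of care are the bookkeeping in the unfolded recursion and the mild subtlety that, because $\log 1 = 0$, the first step leaves $t_k(n)^{1/k^n}$ unchanged, so the sequence is non-increasing for all $n$ and strictly decreasing only from $n = 1$ onward.
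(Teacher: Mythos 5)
Your proof is correct and follows essentially the same route as the paper: the paper sets $a_k(n) = k^{-n}\log t_k(n)$, derives the recursion $a_k(n) = a_k(n-1) - \frac{\log n}{k^n}$, and reads off monotonicity and the positive limit from the partial sums of the series defining $\sigma_k$, which is exactly your computation with the unfolding made explicit. Your identification of the limit as $c = t_0/\sigma_k$ is a nice touch that the paper leaves implicit.
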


\begin{proof}
	It is equivalent to show that $a_k(n) \coloneqq k^{-n} \log t_k(n)$ decreases and tends to a positive constant. Since $a_k(n)$ satisfies the recursion
	\[
		a_k(n) = a_k(n-1) - \frac{\log n}{k^n}
	\]
	for $n \ge 1$, the sequence decreases monotonically. By our assumption, we have $a_k(0) > \log \sigma_k$, which implies that
	\[
		\lim_{n \to \infty} a_k(n) = a_k(0) - \sum_{n=1}^\infty \frac{\log n}{k^n} > 0.
	\]
\end{proof}

We can estimate $(k,l)$-G\"{o}bel sequence $g_{k,l}(n)$ by using the sequence $t_k(n)$ as follows.

\begin{lem}\label{lem:g>t}
	If $l \ge t_0^k$, then $g_{k,l}(n) \ge t_k(n)$ for all $n \ge 1$.
\end{lem}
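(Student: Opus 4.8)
The plan is to prove the inequality by induction on $n$, exploiting the fact that the two recursions share the same ``raise-to-the-$k$-and-divide'' structure and that the Göbel recursion carries one extra \emph{nonnegative} summand. First I would rewrite the recursion from \cref{dfn:kl-Goebel} in the solved form
\[
	g_{k,l}(n+1) = \frac{1}{n+1}\left(n\,g_{k,l}(n) + g_{k,l}(n)^k\right),
\]
and place it next to the defining recursion \eqref{tk-rec}, namely $t_k(n+1) = \frac{1}{n+1} t_k(n)^k$. The comparison is then transparent: the Göbel iterate majorizes the Somos-like iterate provided the previous terms already compare correctly and one can discard the surplus term $n\,g_{k,l}(n)$.

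For the base case $n=1$, I would note that \eqref{tk-rec} with $t_k(0) = t_0$ gives $t_k(1) = t_0^k$, so the hypothesis $l \ge t_0^k$ is exactly the statement $g_{k,l}(1) = l \ge t_0^k = t_k(1)$. For the inductive step, assuming $g_{k,l}(n) \ge t_k(n)$, I would drop the nonnegative term $n\,g_{k,l}(n)$ and invoke the monotonicity of $x \mapsto x^k$ on the positive reals:
\[
	g_{k,l}(n+1) \ge \frac{1}{n+1} g_{k,l}(n)^k \ge \frac{1}{n+1} t_k(n)^k = t_k(n+1).
\]

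The only points requiring care are positivity and the legitimacy of the power-map comparison. For the former I would cite the earlier lemma guaranteeing $g_{k,l}(n) > 1 > 0$, and observe that $t_k(n) > 0$ follows inductively from $t_0 > 1$ together with \eqref{tk-rec}; this ensures both that $n\,g_{k,l}(n) \ge 0$ is genuinely a term we may discard and that $g_{k,l}(n) \ge t_k(n) \ge 0$ is preserved upon raising to the $k$-th power. I do not expect any substantial obstacle here: the argument is a one-line induction, and the main care is bookkeeping — confirming the index shift $t_k(1) = t_0^k$ so the hypothesis $l \ge t_0^k$ seeds the induction at $n=1$ rather than at $n=0$.
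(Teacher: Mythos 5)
Your proof is correct and matches the paper's argument exactly: the same induction on $n$, seeded by $g_{k,l}(1) = l \ge t_0^k = t_k(1)$, with the inductive step discarding the nonnegative term $n\,g_{k,l}(n)$ and comparing $k$-th powers. The extra care you take about positivity is sound but the paper treats it as implicit.
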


\begin{proof}
	It immediately follows from induction on $n$. Indeed, we have $g_{k,l}(1) = l \ge t_0^k = t_k(1)$. Assume that $g_{k,l}(n) \ge t_k(n)$. Then we obtain
	\[
		g_{k,l}(n+1) > \frac{1}{n+1} g_{k,l}(n)^k \ge \frac{1}{n+1} t_k(n)^k = t_k(n+1).
	\]
\end{proof}

\begin{prop}
	For any integers $k, l \ge 2$, the constant $C_{k,l}$ defined in \eqref{dfn:C-kl} satisfies $C_{k,l} > 1$.
\end{prop}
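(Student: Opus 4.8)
The plan is to derive $C_{k,l}>1$ from the comparison $g_{k,l}(n)\ge t_k(n)$ combined with \cref{lem:tkn>1}, after reducing the problem to a single exceptional pair. The key point is that \cref{lem:g>t} and \cref{lem:tkn>1} already settle the proposition whenever $l$ is large enough, and only finitely many small values of $l$ need separate treatment.

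First I would dispose of the case $l>\sigma_k^k$. Taking $t_0\coloneqq l^{1/k}>\sigma_k$, the initial comparison $l=t_0^k$ makes \cref{lem:g>t} applicable, so $g_{k,l}(n)\ge t_k(n)$ for all $n\ge 1$. Raising to the power $1/k^n$ and using \eqref{dfn:C-kl} together with \cref{lem:tkn>1} gives $C_{k,l}=\lim_n g_{k,l}(n)^{1/k^n}\ge \lim_n t_k(n)^{1/k^n}=c>1$. It then remains to treat the integer pairs with $l\le \sigma_k^k$. Here I would estimate $\log(\sigma_k^k)=\sum_{m=2}^\infty (\log m)/k^{m-1}$ from \eqref{k-Somos}, noting that this is decreasing in $k$; an explicit numerical bound shows $\sigma_k^k<2$ for every $k\ge 3$ and $\sigma_2^2<3$. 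Consequently every integer $l\ge 2$ exceeds $\sigma_k^k$ when $k\ge 3$, while for $k=2$ only $l=2$ fails $l>\sigma_2^2$. Thus the first paragraph covers all pairs except $(k,l)=(2,2)$.

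For $(2,2)$ the uniform threshold genuinely fails, since $C_{2,2}(n)\le C_{2,2}(1)=\sqrt 2<\sigma_2$ for all $n$, so no comparison with $t_k$ started at $n=1$ can succeed. Instead I would compare $g_{2,2}$ with a Somos-like sequence of the form \eqref{tk-rec} started at a later index. Using the explicit values $g_{2,2}(1),\dots,g_{2,2}(4)=2,3,5,10$, define $u(n)$ for $n\ge 4$ by $u(4)=10$ and $u(n+1)=u(n)^2/(n+1)$. The same induction as in \cref{lem:g>t} yields $g_{2,2}(n)\ge u(n)$ for $n\ge 4$, because $g_{2,2}(n+1)=\tfrac{1}{n+1}(n\,g_{2,2}(n)+g_{2,2}(n)^2)\ge \tfrac{1}{n+1}u(n)^2=u(n+1)$. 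Running the argument of \cref{lem:tkn>1} on this shifted sequence shows that $2^{-n}\log u(n)$ converges to $2^{-4}\log 10-\sum_{m\ge 5}(\log m)/2^m$, and a direct numerical check confirms this limit is positive (about $0.033$). Hence $u(n)^{1/2^n}$ tends to a constant exceeding $1$, and $C_{2,2}\ge \lim_n u(n)^{1/2^n}>1$.

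The only delicate point is the pair $(2,2)$: because $C_{2,2}<\sigma_2$, one must start the comparison from an index $n_0$ large enough that $2^{-n_0}\log g_{2,2}(n_0)$ already beats the tail $\sum_{m>n_0}(\log m)/2^m$, rather than the full sum $\log\sigma_2$. The choice $n_0=4$ leaves a comfortable margin; starting earlier (e.g.\ $n_0=3$, where $g_{2,2}(3)=5$) still works but only barely, which is the reason the uniform argument of the first paragraph cannot reach $l=2$.
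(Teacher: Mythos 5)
Your proposal is correct and follows essentially the same route as the paper: the comparison lemmas handle every pair with $l>\sigma_k^k$ (which, by the monotonicity of $\sigma_k^k$ and the numerical values $\sigma_3^3\approx 1.546$, $\sigma_2^2\approx 2.761$, leaves only $(k,l)=(2,2)$), and the exceptional pair is treated by restarting the Somos-type comparison at a later index. The only difference is cosmetic — the paper starts the shifted sequence at $n_0=3$ with value $5$ (limit $\approx 0.004$) while you start at $n_0=4$ with value $10$ (limit $\approx 0.033$) — and both choices work.
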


\begin{proof}
	We show it by considering three cases. 
	
	(1) For $k \ge 3$ and $l \ge 2$, since $\sigma_3^3 = 1.5462\ldots$ and $\sigma_k^k$ decreases monotonically, there exists a real number $t_0 > 1$ such that $l \ge t_0^k > \sigma_k^k$. For such a $t_0$, by applying \cref{lem:tkn>1} and \cref{lem:g>t}, we obtain
	\[
		C_{k,l}(n) = g_{k,l}(n)^{1/k^n} \ge t_k(n)^{1/k^n} \ge c > 1,
	\]
	which implies that $C_{k,l} > 1$.
	
	(2) For $k=2$ and $l \ge 3$, since $\sigma_2^2 = 2.7612\ldots$, there exists a real number $t_0 > 1$ such that $l \ge t_0^2 > \sigma_2^2$. By the same argument, we also obtain $C_{k,l} > 1$.
	
	(3) For $k = l = 2$, we need to modify the argument. We define another sequence $t'_2(n)$ by the same recursion as in \eqref{tk-rec} with the initial value $t'_2(3) = 5$. Then, the inequality $g_{2,2}(n) \ge t'_2(n)$ for $n \ge 3$ is shown in a manner similar to \cref{lem:g>t}. Thus, we obtain
	\[
		C_{2,2}(n) = g_{2,2}(n)^{1/2^n} \ge t'_2(n)^{1/2^n}.
	\]
	and
	\[
		\lim_{n \to \infty} \frac{1}{2^n} \log t'_2(n) = \frac{1}{2^3} \log 5 - \sum_{n=4}^\infty \frac{\log n}{2^n} = 0.00395\ldots > 0.
	\]
	Hence, we conclude that $C_{2,2} > 1$.
\end{proof}

\subsection{Asymptotic behavior}

In the previous subsection, we defined the constant $C_{k,l} > 1$. By definition, we obtain
\begin{align}\label{eq:relatin-g-c-d}
	\frac{g_{k,l}(n)}{C_{k,l}^{k^n}} = \left(\frac{C_{k,l}(n)}{C_{k,l}}\right)^{k^n}.
\end{align}
Thus, it is sufficient to evaluate the right-hand side to prove \cref{thm:generalized-zagier-asymptotic}. The aim of this subsection is to establish a connection to a simpler sequence.

\begin{thm}\label{thm:error-conv}
	For any real number $R > 0$, we have
	\begin{align*}
		\exp \left(\sum_{m=1}^\infty\frac{\log{(m+n)}}{k^m}\right) - \left(\frac{C_{k,l}(n)}{C_{k,l}}\right)^{k^n} =O\left(\frac{1}{n^R}\right) 
	\end{align*}
	as $n \to \infty$.
\end{thm}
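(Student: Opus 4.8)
The plan is to relate the quantity $(C_{k,l}(n)/C_{k,l})^{k^n}$ to the recursion for $g_{k,l}(n)$ and extract a telescoping-type product. Starting from the recursion $(n+1)g_{k,l}(n+1) = g_{k,l}(n)(n + g_{k,l}(n)^{k-1})$, I would write
$$
g_{k,l}(n+1) = \frac{1}{n+1} g_{k,l}(n)^k \left(1 + \frac{n}{g_{k,l}(n)^{k-1}}\right).
$$
Taking logarithms and unwinding the recursion should express $\log g_{k,l}(n)$ as $k^n \log C_{k,l}(n)$, so that
$$
\log\!\left(\frac{C_{k,l}(n)}{C_{k,l}}\right)^{k^n} = \log g_{k,l}(n) - k^n \log C_{k,l}.
$$
Using $\log C_{k,l} = \lim_{N\to\infty} k^{-N}\log g_{k,l}(N)$, I would telescope the difference $\log g_{k,l}(N) - k^{N-n}\log g_{k,l}(n)$ across the recursion from $n$ to $N$ and then let $N \to \infty$. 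Each step of the recursion contributes a term $-k^{-m}\log(n+m)$ (from the division by $n+1, n+2, \dots$) together with a correction $k^{-m}\log(1 + (n+m)/g_{k,l}(n+m)^{k-1})$.

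\noindent \textbf{Identifying the main term.}
The $-k^{-m}\log(n+m)$ contributions are exactly what produce the factor $\exp\!\big(\sum_{m=1}^\infty k^{-m}\log(m+n)\big)$ appearing in the theorem (after accounting for signs and the normalization of $C_{k,l}$). Concretely, I expect to arrive at an identity of the shape
$$
\left(\frac{C_{k,l}(n)}{C_{k,l}}\right)^{k^n} = \exp\!\left(\sum_{m=1}^\infty \frac{\log(m+n)}{k^m}\right) \cdot \exp\!\left(-\sum_{m=0}^\infty \frac{1}{k^{m+1}} \log\!\left(1 + \frac{n+m}{g_{k,l}(n+m)^{k-1}}\right)\right),
$$
so that the difference in the statement equals the main exponential times $\bigl(1 - \exp(-E_n)\bigr)$, where $E_n$ is the second (error) sum. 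The task then reduces to showing $E_n = O(n^{-R})$ for every $R$, and that the leading exponential factor is bounded, so that the product is $O(n^{-R})$ as well.

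\noindent \textbf{Estimating the error sum.}
The key input is a lower bound on the growth of $g_{k,l}(n)$. From \cref{dfn:C-kl} and the convergence $C_{k,l}(n) \to C_{k,l} > 1$, together with $g_{k,l}(n) \ge t_k(n)$ from \cref{lem:g>t} (and the modified comparison in the $k=l=2$ case), I would extract a bound of the form $g_{k,l}(n) \gg C_{k,l}^{k^n} n^{1/(k-1)}$, or at minimum $g_{k,l}(n)^{k-1} \gg n \cdot n^R$ for large $n$. Then each summand $k^{-(m+1)}\log(1 + (n+m)/g_{k,l}(n+m)^{k-1})$ is bounded using $\log(1+x) \le x$ by
$$
\frac{1}{k^{m+1}} \cdot \frac{n+m}{g_{k,l}(n+m)^{k-1}},
$$
and since $g_{k,l}(n+m)^{k-1}$ grows doubly exponentially in $m$ while $n+m$ grows linearly, the sum over $m$ is dominated by its first term, giving $E_n = O\big(n/g_{k,l}(n)^{k-1}\big)$, which is $O(n^{-R})$ for every $R$ by the lower bound on $g_{k,l}$. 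Finally $1 - e^{-E_n} = O(E_n)$ finishes the estimate.

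\noindent \textbf{Main obstacle.}
The delicate point is making the $N \to \infty$ limit rigorous: I must justify interchanging the limit with the telescoping sum and confirm that the tail $k^{N-n}\log g_{k,l}(N) - k^N \log C_{k,l}$ vanishes, which amounts to controlling the rate at which $C_{k,l}(N) \to C_{k,l}$. This convergence rate is itself governed by the same error sum $E_N$, so the argument is mildly self-referential; I would handle it by first establishing a crude bound (e.g. $g_{k,l}(n)^{k-1} \ge n^{R+1}$ eventually, which follows from $C_{k,l}>1$) to get summability, and then bootstrap to the sharp $O(n^{-R})$ decay. Establishing the uniform lower bound on $g_{k,l}(n)$ strong enough to absorb an arbitrary power $n^R$ is the real crux, but it follows cleanly from $C_{k,l} > 1$ since $C_{k,l}^{k^n}$ eventually dominates any polynomial.
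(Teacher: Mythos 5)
Your proposal follows essentially the same route as the paper: the identity you write down is exactly the paper's \cref{epsilon-bound} (your error sum, reindexed, is the paper's $\epsilon_{k,l}(n) = \sum_{m\ge 1} k^{-m}\log\bigl(1+\tfrac{m+n-1}{g_{k,l}(m+n-1)^{k-1}}\bigr)$), and your estimate via $\log(1+x)\le x$ together with the lower bound $g_{k,l}(n)\ge C_{k,l}^{k^n}$ coming from $C_{k,l}>1$ is precisely how the paper obtains $\epsilon_{k,l}(n) < 2n/\exp(k^{n-1})$. The only slip is your claim that the leading factor $\exp\bigl(\sum_m k^{-m}\log(m+n)\bigr)$ is bounded --- it grows like $\sigma_k\, n^{1/(k-1)}$, as the paper notes --- but this is harmless since the error sum decays faster than any polynomial.
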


First, we prepare a lemma with the aim of proving this theorem. Let
\[
	 \epsilon_{k,l}(n) \coloneqq \sum_{m=1}^\infty\frac{\log{(m+n)}}{k^m} - k^n (\log C_{k,l}(n) - \log C_{k,l}).
\]

\begin{lem}\label{epsilon-bound}
	We have
	\[
		\epsilon_{k,l}(n) = \sum_{m=1}^\infty \frac{1}{k^m} \log \left(1+ \frac{m+n-1}{g_{k,l}(m+n-1)^{k-1}} \right) < \frac{2n}{\exp(k^{n-1})}
	\]
	for sufficiently large $n$.
\end{lem}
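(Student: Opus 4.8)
The plan is to establish the two assertions in turn: first the closed-form identity for $\epsilon_{k,l}(n)$, and then the upper bound. For the identity, I would exploit that $C_{k,l}(j)$ decreases to $C_{k,l}$, so that $\log C_{k,l}(n)-\log C_{k,l}$ telescopes over the tail,
\[
	\log C_{k,l}(n)-\log C_{k,l}=-\sum_{j=n}^\infty\big(\log C_{k,l}(j+1)-\log C_{k,l}(j)\big).
\]
Taking logarithms in \eqref{Ckl-rec} gives $k^{j+1}\big(\log C_{k,l}(j+1)-\log C_{k,l}(j)\big)=\log\!\big(g_{k,l}(j+1)/g_{k,l}(j)^{k}\big)$, and the recursion of \cref{dfn:kl-Goebel} rewrites the right-hand side as $\log\!\big(1+j/g_{k,l}(j)^{k-1}\big)-\log(j+1)$. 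Multiplying the telescoped sum by $k^n$ and re-indexing with $m=j-n+1$ sends the $-\log(j+1)$ contributions to $\sum_{m\ge1}k^{-m}\log(m+n)$, which cancels the first term in the definition of $\epsilon_{k,l}(n)$ and leaves precisely the claimed series. Here one only needs the absolute convergence of the tail (each term is $O(k^{-j}\log j)$) to justify the rearrangement, together with a careful check that the substitution places $g_{k,l}(m+n-1)^{k-1}$ and $m+n$ where they should be.

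For the upper bound, I would apply the elementary inequality $\log(1+x)<x$ for $x>0$ to each summand, obtaining
\[
	\epsilon_{k,l}(n)<\sum_{m=1}^\infty\frac{1}{k^m}\,\frac{m+n-1}{g_{k,l}(m+n-1)^{k-1}}.
\]
I would then insert a lower bound for $g_{k,l}$ from the previous subsection, either the crude $g_{k,l}(N)=C_{k,l}(N)^{k^N}\ge C_{k,l}^{k^N}$ coming from the monotonicity of $C_{k,l}(n)$, or the sharper Somos-type comparison of \cref{lem:g>t}. Since the denominator grows doubly exponentially in $N=m+n-1$, the $m=1$ term dominates the series: it supplies the factor $n$ appearing in the numerator of the claimed bound, while the terms with $m\ge2$ are smaller by a further doubly-exponential factor and contribute only a negligible amount. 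Collecting these for $n$ large then yields a doubly-exponentially small estimate, which one simplifies into the stated closed form.

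I expect the genuine difficulty to lie in this second step rather than in the identity. The identity is essentially bookkeeping once the telescoping and the substitution from \eqref{Ckl-rec} are in place, whereas the bound requires that the chosen lower bound for $g_{k,l}$ be strong enough to force the asserted decay of the denominator, and that the tail $\sum_{m\ge2}$ be dominated by the leading term uniformly for all large $n$. Pinning down the precise constant and exponent on the right-hand side is exactly where the estimation must be carried out with care; any super-polynomial decay of $\epsilon_{k,l}(n)$ already suffices for the application in \cref{thm:error-conv}, since the prefactor $\exp\!\big(\sum_{m\ge1}k^{-m}\log(m+n)\big)$ grows only like $n^{1/(k-1)}$.
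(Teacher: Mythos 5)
Your proposal is correct and follows essentially the same route as the paper: the identity comes from telescoping \eqref{Ckl-rec} exactly as you describe, and the bound from $\log(1+x)<x$ together with the lower bound $g_{k,l}(N)=C_{k,l}(N)^{k^N}\ge C_{k,l}^{k^N}$ and the tail estimate $\sum_{m\ge 1}m/k^m=k/(k-1)^2\le 2$. Your caution about pinning down the precise exponent is well placed: the paper's final step $C_{k,l}^{(k-1)k^n}>\exp(k^{n-1})$ amounts to the $n$-independent condition $(k-1)k\log C_{k,l}>1$, which fails for instance at $(k,l)=(2,2)$ where $C_{2,2}\approx 1.0478$, so the bound one actually obtains along these lines is $\epsilon_{k,l}(n)<2n\,C_{k,l}^{-(k-1)k^n}$ --- still super-polynomially small, which, as you note, is all that \cref{thm:error-conv} requires.
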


\begin{proof}
	By \eqref{Ckl-rec}, we have
	\[
		\log C_{k,l}(m+n) - \log C_{k,l}(m+n-1) = \frac{1}{k^{m+n}} \log \left(\frac{m+n-1}{g_{k,l}(m+n-1)^{k-1}} + 1 \right) - \frac{\log (m+n)}{k^{m+n}}.
	\]
	By summing each side over $m$, we obtain
	\[
		k^n (\log C_{k,l} - \log C_{k,l}(n)) = \sum_{m=1}^\infty \frac{1}{k^m} \log \left(\frac{m+n-1}{g_{k,l}(m+n-1)^{k-1}} + 1 \right) - \sum_{m=1}^\infty \frac{\log (m+n)}{k^m},
	\]
	which implies the first equality. 
	
	Next, the inequality $\log(1+x) < x$ for $x >0$ implies that
	\[
		\sum_{m=1}^\infty \frac{1}{k^m} \log \left(1+ \frac{m+n-1}{g_{k,l}(m+n-1)^{k-1}} \right) < \sum_{m=1}^\infty \frac{1}{k^m} \frac{m+n-1}{g_{k,l}(m+n-1)^{k-1}}.
	\]
	Since $C_{k,l}(n) \ge C_{k,l} > 1$ and $g_{k,l}(n) = C_{k,l}(n)^{k^n}$, we have
	\[
		g_{k,l}(m+n-1)^{k-1} = C_{k,l}(m+n-1)^{(k-1) k^{m+n-1}} \ge C_{k,l}^{(k-1)k^n} > \exp(k^{n-1})
	\]
	for sufficiently large $n$. Therefore, by using $m + n - 1 \le mn$ for $m, n \ge 1$, we obtain
	\[
		\epsilon_{k,l}(n) < \frac{n}{\exp(k^{n-1})} \sum_{m=1}^\infty \frac{m}{k^m} = \frac{n}{\exp(k^{n-1})} \frac{k}{(k-1)^2} \le \frac{2n}{\exp(k^{n-1})}.
	\]
\end{proof}

\begin{proof}[Proof of \cref{thm:error-conv}]
	By using the expression
	\[
		\left(\frac{C_{k,l}(n)}{C_{k,l}}\right)^{k^n} = \exp \left(\sum_{m=1}^\infty \frac{\log(m+n)}{k^m} - \epsilon_{k,l}(n) \right)
	\]
	and the inequality $e^x - e^{x-\epsilon} \le \epsilon e^x$, we have
	\[
		\exp{\left(\sum_{m=1}^\infty\frac{\log{(m+n)}}{k^m}\right)} - \left(\frac{C_{k,l}(n)}{C_{k,l}}\right)^{k^n} \le \epsilon_{k,l}(n) \exp\left(\sum_{m=1}^\infty \frac{\log(m+n)}{k^m} \right).
	\]
	Moreover, by applying $m+n \le m(n+1)$ for $m,n \ge 1$, it is bounded by
	\[
		\le \epsilon_{k,l}(n) \sigma_k \cdot \exp\left(\sum_{m=1}^\infty \frac{\log(n+1)}{k^m}\right) = \epsilon_{k,l}(n) \sigma_k \cdot (n+1)^{\frac{1}{k-1}},
	\]
	where $\sigma_k$ is the $k$-Somos constant defined in \eqref{k-Somos}. Finally, by \cref{epsilon-bound}, we obtain the theorem.
\end{proof}

\subsection{The asymptotic coefficients $a_{k,r}$}

Finally, to complete the statement of \cref{thm:generalized-zagier-asymptotic} and its proof, we provide the asymptotic expansion of the first term of \cref{thm:error-conv}. First, we review the relevant parts of the studies by Sondow and Hadjicostas~\cite{Sondow2007} regarding a generalization of Somos's sequence introduced in \eqref{somos-seq}. Then, we establish a connection between their results and our $(k,l)$-G\"{o}bel sequences. To explicitly state their claims, we recall the Eulerian polynomials.

\begin{dfn}
	For any integer $r \ge 0$, we define the \emph{Eulerian polynomial} $A_r(t)\in\mathbb Z{[t]}$ by
	\begin{align*}
		\sum_{m=1}^\infty\frac{m^r}{t^m}=\frac{A_r(t)}{(t-1)^{r+1}}.
	\end{align*}
\end{dfn}
  
\begin{exm}
	The first few examples are given by $A_0(t) =1$ and
	\begin{align*}
		A_1(t)&=t,\\
		A_2(t)&=t^2+t,\\
		A_3(t)&=t^3+4t^2+t,\\
		A_4(t)&=t^4+11t^3+11t^2+t,\\
		A_5(t)&=t^5+26t^4+66t^3+26t^2+t.
	\end{align*}
\end{exm}

Then, the following is known.

\begin{thm}[\protect{\cite[Theorem 9 and Lemma 1]{Sondow2007}}]\label{thm:a-dfn-property}
	We define the sequence $(a_{k,r})_r$ to be
	\begin{align}\label{def-akr}
		a_{k,r}\coloneqq\sum_{\substack{m_1, \dots, m_r \geq 0 \\ m_1 + 2m_2 + \cdots + r m_r = r}}\prod_{j=1}^r\frac{1}{m_j !}\left(\frac{(-1)^{j-1}}{j}\frac{A_j(k)}{(k-1)^{j+1}}\right)^{m_j}, 
	\end{align}
	then we have
	\begin{align*}
		\exp\left(\sum_{m=1}^\infty\frac{\log(m+n)}{k^m}\right)\sim n^{\frac{1}{k-1}}\left(1+\sum_{r=1}^\infty\frac{a_{k,r}}{n^r}\right) \quad (n\to\infty). 
	\end{align*}
\end{thm}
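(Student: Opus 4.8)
The plan is to reduce the claim to two standard manipulations: first extracting the power of $n$ from the logarithm, and then converting the resulting additive (logarithmic) asymptotic expansion into the multiplicative one by exponentiation. First I would split $\log(m+n)=\log n+\log(1+m/n)$ and use $\sum_{m=1}^\infty k^{-m}=1/(k-1)$ to write
\[
	\sum_{m=1}^\infty\frac{\log(m+n)}{k^m}=\frac{\log n}{k-1}+S_k(n),\qquad S_k(n)\coloneqq\sum_{m=1}^\infty\frac{1}{k^m}\log\!\left(1+\frac{m}{n}\right),
\]
so that the prefactor $n^{1/(k-1)}$ appears automatically after exponentiating. The heart of the matter is then to establish the logarithmic expansion
\[
	S_k(n)\sim\sum_{j=1}^\infty\frac{b_{k,j}}{n^j},\qquad b_{k,j}\coloneqq\frac{(-1)^{j-1}}{j}\,\frac{A_j(k)}{(k-1)^{j+1}}.
\]

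To prove this, I would fix $R$ and apply Taylor's theorem with Lagrange remainder to $\log(1+x)$: for every $x>0$ one has $\log(1+x)=\sum_{j=1}^R\frac{(-1)^{j-1}}{j}x^j+E_R(x)$ with $|E_R(x)|\le\frac{1}{R+1}x^{R+1}$, the bound being uniform because the derivative factor $(1+\xi)^{-(R+1)}$ is at most $1$ for $\xi>0$. Substituting $x=m/n$, interchanging the \emph{finite} sum over $j$ with the absolutely convergent sum over $m$, and invoking the defining identity $\sum_{m=1}^\infty m^j/k^m=A_j(k)/(k-1)^{j+1}$ of the Eulerian polynomials gives $S_k(n)=\sum_{j=1}^R b_{k,j}/n^j+\mathcal E_R(n)$ with
\[
	|\mathcal E_R(n)|\le\frac{1}{(R+1)\,n^{R+1}}\sum_{m=1}^\infty\frac{m^{R+1}}{k^m}=O\!\left(\frac{1}{n^{R+1}}\right).
\]
This is the step demanding care, since the termwise Taylor expansion of $\log(1+m/n)$ is \emph{not} valid for $m\ge n$; the point is that the uniform Lagrange bound, combined with the convergence of $\sum_m m^{R+1}/k^m$ for $k\ge 2$, controls the whole tail simultaneously, so the formal series is a genuine asymptotic expansion rather than a convergent one.

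Finally I would exponentiate. Writing $P_R(n)=\sum_{j=1}^R b_{k,j}/n^j$, the previous step yields $S_k(n)=P_R(n)+O(n^{-(R+1)})$, hence $\exp(S_k(n))=\exp(P_R(n))\bigl(1+O(n^{-(R+1)})\bigr)$ because the error tends to $0$. Since $\exp(P_R(n))$ is the exponential of a polynomial in $x=1/n$, it is entire in $x$, and by the exponential (multinomial) formula its Taylor coefficient of $x^r$ for $r\le R$ equals
\[
	\sum_{\substack{m_1,\dots,m_r\ge 0\\ m_1+2m_2+\cdots+rm_r=r}}\prod_{j=1}^r\frac{b_{k,j}^{m_j}}{m_j!}=a_{k,r},
\]
independently of $R$, as only indices $j\le r\le R$ occur. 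Thus $\exp(S_k(n))=\sum_{r=0}^R a_{k,r}/n^r+O(n^{-(R+1)})$ for every $R$, and multiplying by $n^{1/(k-1)}$ gives the asserted expansion with scale $\lambda_r(n)=n^{1/(k-1)-r}$. The main obstacle is the rigor of the middle step; once the uniform remainder estimate is secured, the remainder is routine bookkeeping with the exponential formula.
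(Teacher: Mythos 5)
Your proof is correct. Note that the paper does not actually prove this statement: it is imported verbatim from Sondow--Hadjicostas \cite{Sondow2007} (their Theorem 9 and Lemma 1), so there is no in-paper argument to compare against, and your write-up fills a gap the paper deliberately leaves to the reference. Your route --- peeling off $\frac{\log n}{k-1}$ via $\sum_{m\ge 1}k^{-m}=\frac{1}{k-1}$, Taylor-expanding $\log(1+m/n)$ with the Lagrange bound $|E_R(x)|\le \frac{x^{R+1}}{R+1}$ uniform over all $x>0$, summing against $k^{-m}$ using the defining identity $\sum_{m\ge1}m^j/k^m=A_j(k)/(k-1)^{j+1}$ of the Eulerian polynomials, and then converting the additive expansion to a multiplicative one by the exponential (multinomial) formula --- is the natural one and is essentially the argument of the cited source. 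The one step that genuinely requires care, namely that the termwise Taylor series of $\log(1+m/n)$ diverges for $m\ge n$ yet the uniform remainder bound together with the convergence of $\sum_m m^{R+1}k^{-m}$ still controls the whole tail, is precisely the point you isolate and handle correctly; the final bookkeeping showing that the coefficient of $n^{-r}$ in $\exp(P_R(n))$ is independent of $R$ for $r\le R$ is also right.
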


\begin{exm}
	The first several terms are calculated as follows.
	\begin{align*}
		a_{k,1} &=\frac{k}{(k-1)^2},\\
		a_{k,2} &=-\frac{k(k^2 - k - 1)}{2(k-1)^4},\\
		a_{k,3} &=\frac{k(2k^4 + k^3 - 11k^2 + 7k + 2)}{6(k-1)^6},\\
		a_{k,4} &=-\frac{k(6k^6 + 37k^5 - 124k^4 + 53k^3 + 92k^2 - 59k - 6)}{24(k-1)^8},\\
		a_{k,5} & = \frac{k(24k^8 + 478k^7 - 1013k^6 - 1324k^5 + 4411k^4 - 2724k^3 - 453k^2 + 578k + 24)}{120(k-1)^{10}}.
\end{align*}
In particular, when $k=2$, we observe that $(a_{2,r})_{r=1}^5=(2,-1,4,-21,138)$, which matches the asymptotic coefficients in \eqref{Zagier-asymp}. 
\end{exm}

\begin{proof}[Proof of \cref{thm:generalized-zagier-asymptotic}]
	We will show that 
	\begin{align*}
		\frac{g_{k,l}(n)}{C_{k,l}^{k^n}}-n^{\frac{1}{k-1}}\left(1+\sum_{r=1}^R \frac{a_{k,r}}{n^r}\right)=O\left(\frac{n^{\frac{1}{k-1}}}{n^{R+1}}\right) \quad (n\to\infty)
	\end{align*}
	for any $R \ge 0$. It immediately follows by applying \eqref{eq:relatin-g-c-d}, \cref{thm:error-conv}, and \cref{thm:a-dfn-property}.
\end{proof}

\section{Further observations} \label{s4}

Zagier~\cite{Zagier1996} observed not only the asymptotic formula but also a heuristic explaining why $N_{2,2} = 43$ is unexpectedly large, assuming a certain ``randomness" of the values $g_{2,2}(n)$ modulo $p$ for $1 \le n < p$. Inspired by his heuristic argument, we can ask the question: for any pair of integers $k, l \ge 2$, does there exist (infinitely many) $p \in \mathcal{P}$ such that $g_{k,l}(p) \not\in \Zp{p}$. For instance, $g_{2,2}(p) \not\in \Zp{p}$ holds when $p = 43, 61, 67, 83, \ldots$. We do not have an answer to this question, but we obtained a result concerning ``randomness", which we present as a final remark.

\begin{thm}\label{random}
	For any prime number $p \in \mathcal{P}$ and integers $k, l, r \ge 2$, the set
	\[
		G_{k,l,p}^r \coloneqq \{g_{k,l}(n) \bmod{p^r} : 1 \le n < p, g_{k,l}(n) \equiv 0 \pmod{p^{r-1}}\}
	\]
	is a singleton $\{0 \bmod{p^r}\}$ or has the same cardinality as $\{1 \le n < p : g_{k,l}(n) \equiv 0 \pmod{p^{r-1}}\}$. Here the congruence is considered in $\Zp{p}$.
\end{thm}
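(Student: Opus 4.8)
The plan is to work throughout in $\Zp{p}$ and to control $g_{k,l}(n) \bmod{p^r}$ on the relevant indices by tracking a single ``leading digit''. First I would record that for $1 \le n < p$ the computation of $g_{k,l}(n)$ from the recursion divides only by the integers $2, 3, \ldots, n \le p-1$, all coprime to $p$, so $g_{k,l}(n) \in \Zp{p}$ and reduction modulo $p^r$ is meaningful on the whole range $1 \le n < p$ (this is essentially \cref{lem:key-properties}(1)); moreover every such index is a unit modulo $p$. Rewriting the recursion as $g_{k,l}(n+1) = (n+1)^{-1} g_{k,l}(n)\bigl(n + g_{k,l}(n)^{k-1}\bigr)$, I would next observe that for $n \le p-2$ the factor $(n+1)^{-1}$ is a unit, so $g_{k,l}(n) \equiv 0 \pmod{p^{r-1}}$ forces $g_{k,l}(n+1) \equiv 0 \pmod{p^{r-1}}$. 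Hence the index set
\[
	S \coloneqq \{\, 1 \le n < p : g_{k,l}(n) \equiv 0 \pmod{p^{r-1}} \,\}
\]
is upward closed, i.e.\ $S = \{n_0, n_0+1, \ldots, p-1\}$ for some $n_0$, or $S = \emptyset$.

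The heart of the argument is a telescoping simplification. For $n \in S$ write $g_{k,l}(n) = p^{r-1} a_n$ with $a_n \in \Zp{p}$, so that $g_{k,l}(n) \bmod{p^r}$ is recorded by $a_n \bmod p$. Since $r, k \ge 2$ give $(r-1)(k-1) \ge 1$, the term $g_{k,l}(n)^{k-1}$ is divisible by $p$, whence $n + g_{k,l}(n)^{k-1} \equiv n \pmod p$. Feeding this into the recursion collapses the nonlinear recurrence to the linear congruence
\[
	a_{n+1} \equiv \frac{n}{n+1}\, a_n \pmod p \qquad (n, n+1 \in S),
\]
and telescoping over $S = \{n_0, \ldots, p-1\}$ yields $a_n \equiv \frac{n_0}{n}\, a_{n_0} \pmod p$.

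Finally I would conclude by a dichotomy on $a_{n_0} \bmod p$. If $a_{n_0} \equiv 0$, then $a_n \equiv 0$ for all $n \in S$, so $g_{k,l}(n) \equiv 0 \pmod{p^r}$ throughout and $G_{k,l,p}^r = \{0 \bmod{p^r}\}$ is a singleton. If instead $a_{n_0}$ is a unit (which also covers $S = \emptyset$ vacuously), then the residue $g_{k,l}(n) \bmod{p^r}$ equals $p^{r-1}(a_{n_0} n_0\, n^{-1} \bmod p)$, and since $n \mapsto n^{-1}$ is injective on the units $\{1, \ldots, p-1\}$ modulo $p$, the map $n \mapsto g_{k,l}(n) \bmod{p^r}$ is injective on $S$; hence $|G_{k,l,p}^r| = |S|$, which is exactly the claimed cardinality.

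I do not anticipate a genuine difficulty: once set up, the proof reduces to a short telescoping computation. The conceptual step to get right is recognizing that $g_{k,l}(n)^{k-1}$ vanishes modulo $p$, which linearizes the recursion and produces the factor $n/(n+1)$. The point that must be handled with care is the structural claim that $S$ is an interval terminating at $p-1$, since it is precisely this that makes the product $\prod_{j=n_0}^{n-1} \frac{j}{j+1}$ collapse to $n_0/n$ and pins every $a_n$ to the single value $a_{n_0}$; the recurring secondary point is that all indices in range are units modulo $p$, used both to form $n^{-1}$ and to obtain injectivity in the non-degenerate case.
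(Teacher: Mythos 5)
Your proof is correct and follows essentially the same route as the paper: your telescoped relation $a_n \equiv \frac{n_0}{n}a_{n_0} \pmod{p}$ is exactly the invariant $n\,g_{k,l}(n) \equiv ab\,p^{r-1} \pmod{p^r}$ of \cref{random-lem}, established there by the same induction (using $k(r-1)\ge r$ to kill the nonlinear term), and the concluding dichotomy on the leading digit matches the paper's case split on $b$.
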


\begin{exm}
	Let $r = 2$. For $(k,l) = (4,4)$ and $p=13$, we see that $g_{4,4}(1) \not\equiv 0 \pmod{13}$ and $g_{4,4}(n) \equiv 0 \pmod{13}$ for $2 \le n \le 12$. Moreover, we can check that all entries of
	\[
		(g_{4,4}(n) \bmod{13^2})_{2 \le n \le 12} = (130,143, 65, 52, 156, 13, 117, 104, 26, 39, 78)
	\]
	are distinct from each other.
	
	On the other hand, for $(k,l) = (3,2)$ and $p = 13$, we see that $g_{3,2}(n) \not\equiv 0 \pmod{13}$ for $1 \le n \le 3$ and $g_{3,2}(n) \equiv 0 \pmod{13^2}$ for $4 \le n \le 12$. Thus, $G_{3,2,13}^2 = \{0 \bmod{13^2}\}$ is a singleton. 
\end{exm}

To prove the theorem, we first show a lemma.

\begin{lem}\label{random-lem}
	Let $k,l,r \ge 2$ be integers and $p \in \mathcal{P}$. Assume that there exists $1 \le a < p$ and $0 \le b < p$ such that $g_{k,l}(a) \equiv b p^{r-1} \pmod{p^r}$. Then, for any $a \le n < p$, we have
	\[
		n g_{k,l}(n) \equiv ab p^{r-1} \pmod{p^r}.
	\]
\end{lem}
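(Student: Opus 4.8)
The plan is to run an induction on $n$ starting from the base point $n = a$, after first rewriting the defining recursion in a convenient additive form. Multiplying out the recursion in \cref{dfn:kl-Goebel} and writing $g_n \coloneqq g_{k,l}(n)$ for brevity, one obtains
\[
	(n+1) g_{n+1} = n\, g_n + g_n^k,
\]
so the quantity we wish to control, namely $n g_n$, evolves by adding the nonlinear term $g_n^k$. The base case is immediate: at $n = a$ the hypothesis $g_a \equiv b p^{r-1} \pmod{p^r}$ gives $a g_a \equiv ab\, p^{r-1} \pmod{p^r}$.

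For the inductive step, suppose $a \le n < p-1$ and $n g_n \equiv ab\, p^{r-1} \pmod{p^r}$. Since $1 \le n < p$, the integer $n$ is a unit in $\Zp{p}$, so the hypothesis forces $g_n \equiv n^{-1} ab\, p^{r-1} \pmod{p^r}$; in particular $g_n \equiv 0 \pmod{p^{r-1}}$. Raising to the $k$-th power gives $g_n^k \equiv 0 \pmod{p^{k(r-1)}}$, and because $k, r \ge 2$ we have $k(r-1) \ge 2(r-1) \ge r$, whence $g_n^k \equiv 0 \pmod{p^r}$. Substituting into the additive recursion yields
\[
	(n+1) g_{n+1} = n g_n + g_n^k \equiv ab\, p^{r-1} \pmod{p^r},
\]
which is precisely the claim at index $n+1$ (note $n+1 \le p-1 < p$), completing the induction.

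One bookkeeping point must be settled for these congruences to be meaningful: every $g_n$ with $a \le n < p$ must lie in $\Zp{p}$. This follows by the same induction, since $g_a \in \Zp{p}$ by hypothesis and, whenever $g_n \in \Zp{p}$ with $n+1 \le p-1$, the right-hand side $n g_n + g_n^k$ lies in $\Zp{p}$ while $n+1$ is a unit there, forcing $g_{n+1} \in \Zp{p}$; alternatively one invokes the contrapositive of \cref{lem:key-properties}~(2). I do not anticipate a serious obstacle. The only place both hypotheses $k \ge 2$ and $r \ge 2$ are genuinely used is the elementary inequality $k(r-1) \ge r$, which guarantees that the nonlinear term $g_n^k$ is negligible modulo $p^r$; this is the crux that allows the linear relation $n g_n \equiv ab\, p^{r-1}$ to propagate unchanged along the whole range $a \le n < p$.
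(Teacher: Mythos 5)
Your proof is correct and takes essentially the same route as the paper's: induction starting at $n=a$ on the rewritten recursion $(n+1)g_{n+1}=ng_n+g_n^k$, with the crux in both arguments being that $g_n\equiv 0\pmod{p^{r-1}}$ forces $g_n^k\equiv 0\pmod{p^{k(r-1)}}$ and $k(r-1)\ge r$. The additional bookkeeping that each $g_n$ lies in $\Zp{p}$ is a reasonable touch the paper leaves implicit.
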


\begin{proof}
	It follows from induction on $n$. The first condition $a g_{k,l}(a) \equiv ab p^{r-1} \pmod{p^r}$ is clearly satisfied by our assumption. Assume that $ng_{k,l}(n) \equiv ab p^{r-1} \pmod{p^r}$ for some $a \le n < p-1$. By definition,
	\[
		(n+1)g_{k,l}(n+1) = g_{k,l}(n)^k + n g_{k,l}(n) \equiv (n^{-1} ab)^k p^{k(r-1)} + ab p^{r-1} \pmod{p^r},
	\]
	which implies that $(n+1) g_{k,l}(n+1) \equiv ab p^{r-1} \pmod{p^r}$ because $k(r-1) \ge r$.
\end{proof}

\begin{proof}[Proof of \cref{random}]
	It is sufficient to consider the case where $I \coloneqq \{1 \le n < p : g_{k,l}(n) \equiv 0 \pmod{p^{r-1}}\} \neq \emptyset$. Then, we take $a \coloneqq \min I$ and $0 \le b < p$ satisfying $g_{k,l}(a) \equiv b p^{r-1} \pmod{p^r}$. If $b = 0$, then \cref{random-lem} tells us that $g_{k,l}(n) \equiv 0 \pmod{p^r}$ for all $n \in I$, that is, $G_{k,l,p}^r = \{0 \bmod{p^r}\}$. We now assume that $b \neq 0$. If there exist $n_1, n_2 \in I$ satisfying $g_{k,l}(n_1) \equiv g_{k,l}(n_2) \pmod{p^r}$, \cref{random-lem} implies that
	\[
		n_1 g_{k,l}(n_1) \equiv ab p^{r-1} \equiv n_2 g_{k,l}(n_2) \pmod{p^r},
	\]
	and then $n_1 = n_2$.
\end{proof}

Furthermore, \cref{tab:Ibstedt} suggests a tendency for $N_{k,l}$ to increase when $k$ is prime, but elucidating this phenomenon will be a subject for future investigation.

\appendix
\section{Methods for Computing $N_{k,l}$} \label{s-app}

In this appendix, we provide a method to compute $N_{k,l}$ and the Mathematica codes we used to generate \cref{tab:Ibstedt}. First, we recall the following sequence $g_{k,l,p,r}(n)$ introduced in~\cite{MMT2024}.

\begin{dfn}
Let $k,l \geq 2, r \geq 1$ be integers, and $p$ a prime. We define $b(n) = b_{p,r}(n) = r - \nu_p(n!)$ and use the symbol $\mathsf{F}$ to represent ``False." For any positive integer $n$ with $\nu_p(n!) \leq r$, we define $g_{k,l,p,r}(n) \in \mathbb{Z}/p^{b(n)}\mathbb{Z} \cup \{\mathsf{F}\}$ as follows.
\begin{itemize}
	\item[$\bullet$] Initial condition: $g_{k,l,p,r}(1) = l \bmod{p^r} \in \mathbb{Z}/p^r \mathbb{Z}$.
	\item[$\bullet$] For $n \geq 2$: When $g_{k,l,p,r}(n-1) = \mathsf{F}$, $g_{k,l,p,r}(n) = \mathsf{F}$.
	\item[$\bullet$] For $n \geq 2$: When $g_{k,l,p,r}(n-1) = a \bmod{p^{b(n-1)}}$, 
	\begin{itemize}
		\item[-] if $a(n-1+a^{k-1}) \not\equiv 0 \pmod{p^{\nu_p(n)}}$, then $g_{k,l,p,r}(n) = \mathsf{F}$.
		\item[-] if $a(n-1+a^{k-1}) \equiv 0 \pmod{p^{\nu_p(n)}}$, then letting 
		$c \in \mathbb{Z}$ such that $c \cdot (n/p^{\nu_p(n)}) \equiv 1 \pmod{p^{b(n)}}$, we define
	\[
		g_{k,l,p,r}(n) = \frac{a(n-1+a^{k-1})}{p^{\nu_p(n)}} \cdot c \bmod{p^{b(n)}}.
	\]
	\end{itemize}
	\end{itemize}
\end{dfn}

The recursion above, defining $g_{k,l,p,r}(n)$, is a translation of \cref{dfn:kl-Goebel} modulo a power of $p$. Since $g_{k,l}(n) \not\in \Zp{p}$ if and only if $g_{k,l,p,\nu_p(n!)}(n) = \mathsf{F}$, $N_{k,l}$ can be expressed as
\[
    N_{k,l} = \inf \{n \in \Z_{>0} : \text{there exists $p \in \mathcal{P}_{\le n}$ such that } g_{k,l,p,\nu_p(n!)}(n) = \mathsf{F}\}.
\]
The following codes implement this argument in Mathematica.\\

\noindent\hrulefill
\begin{verbatim}
nu[p_, n_] := FirstCase[FactorInteger[n], {p, r_} -> r, 0];
inv[n_, M_] := If[M == 1, 1, ModularInverse[n, M]];

g[k_, l_, p_, r_, 1] := {Mod[l, p^r], r};
g[k_, l_, p_, r_, n_] := 
  g[k, l, p, r, n] = 
   Module[{a, b}, 
    If[g[k, l, p, r, n - 1] === "F", 
     "F", {a, b} = g[k, l, p, r, n - 1];
     If[Mod[a (n - 1 + a^(k - 1)), p^nu[p, n]] != 0, 
      "F", {Mod[
        a (n - 1 + a^(k - 1))/p^nu[p, n] inv[n/p^nu[p, n], 
          p^(b - nu[p, n])], p^(b - nu[p, n])], b - nu[p, n]}]]];       
          
NN[k_, l_] := 
 Module[{n}, n = 2; 
  While[Not[
    MemberQ[Table[
      g[k, l, Prime[m], nu[Prime[m], n!], n], {m, 1, PrimePi[n]}], 
     "F"]], n++]; n];
\end{verbatim}
\noindent\hrulefill

Here, the output \verb|g[k,l,n,p,r] = {a, b}| means that $g_{k,l,p,r}(n) = a \bmod{p^b}$ with $b = b(n)$, that is, $g_{k,l}(n) \equiv a \pmod{p^b}$, and \verb|NN[k,l]| gives the value of $N_{k,l}$.

\bibliographystyle{amsplain}
\bibliography{References} 

\end{document}